\DeclareMathOperator{\HJ}{HJ}
\DeclareMathOperator{\Bin}{Bin}
\newtheorem{theorem}{Theorem}
\newtheorem{lemma}{Lemma}[section]
\title{An upper bound for the Hales--Jewett number $\HJ(4,2)$}
\author{Mikhail Lavrov}
\address{Department of Mathematical Sciences, Carnegie Mellon University, Pittsburgh, PA 15213}
\email{mlavrov@andrew.cmu.edu}
\begin{document}

\begin{abstract}
	We show that for $n$ at least $10^{11}$, any 2-coloring of the $n$-dimensional grid $[4]^n$ contains a monochromatic combinatorial line. This is a special case of the Hales--Jewett Theorem~\cite{hales63}, to which the best known general upper bound is due to Shelah~\cite{shelah88}; Shelah's recursion gives an upper bound between $2 \uparrow \uparrow 7$ and $2 \uparrow \uparrow 8$ for the case we consider, and no better value was previously known.
\end{abstract}

\maketitle

\section{Introduction}

Consider $r$-colorings of the $n$-dimensional grid $[t]^n = \{1, 2, \dots, t\}^n$. We define a \emph{combinatorial line} in $[t]^n$ to be an injective function $\ell : [t] \to [t]^n$ such that for each coordinate $1 \le i \le n$, $\ell_i$ is either constant or the identity function on $[t]$. An example of such a line in $[4]^5$ is the function
\[
	\ell(x) = (3, x, 1, x, 4)
\]
whose image is the set of four points $\{(3,1,1,1,4), (3, 2, 1, 2, 4), (3, 3, 1, 3, 4), (3, 4, 1, 4, 4)\}$. A classic result in Ramsey theory, the Hales--Jewett Theorem~\cite{hales63}, asserts that for all values of the parameters $t$ and $r$, there exists a sufficiently large $n$ such that any $r$-coloring of $[t]^n$ will contain a monochromatic combinatorial line (that is, a line such that the points in its image are all assigned the same color). The Hales--Jewett number $\HJ(t,r)$ is defined to be the least $n$ which suffices.

A pigeonhole argument is enough to show that $\HJ(2,r) = r$ for all $r$. Moreover, Hindman and Tressler~\cite{hindman14} have shown that $\HJ(3,2) = 4$. For more difficult cases, no exact values are known, and the best upper bounds were shown by Shelah~\cite{shelah88}. For $\HJ(4,2)$, this upper bound is already enormous. Shelah proves that if $\HJ(t-1,r) \le n$, then $\HJ(t,r) \le n f(n, r^{t^n})$, where $f(\ell, k)$ is an upper bound for a related problem that satisfies $k \uparrow \uparrow \ell \le f(\ell,k) \le k \uparrow \uparrow (2\ell)$. Starting from $\HJ(3,2)=4$, we get a bound for $\HJ(4,2)$ between $2 \uparrow \uparrow 7$ and $2 \uparrow \uparrow 8$.

On the other hand, the best lower bounds for the Hales--Jewett numbers, which can be obtained from the van der Waerden theorem (as in~\cite{shelah88}), are very far away from these upper bounds. For instance, the integers $\{1, 2, \dots, 34\}$ can be 2-colored in such a way that no 4-term arithmetic progression is monochromatic~\cite{chvatal70}. This coloring can be used to define a coloring of $[4]^{11}$ with no monochromatic combinatorial line, by coloring a point $(x_1, x_2, \dots, x_{11})$ with the color of $x_1 + x_2 + \cdots + x_{11} - 10$, which shows that $\HJ(4,2) > 11$. In general, this argument yields merely exponential lower bounds on the Hales--Jewett numbers: Berlekamp~\cite{berlekamp68} showed that for prime $p$, we can 2-color $p \cdot 2^p$ consecutive integers with no monochromatic $p$-term arithmetic progression. This opens up the possibility that the true values of the Hales--Jewett numbers may be much smaller.

We extend the boundary of which Hales--Jewett numbers are known to have reasonable values by proving the following result:

\begin{theorem}
	\label{main-theorem}
	Whenever the $10^{11}$-dimensional grid $[4]^{10^{11}}$ is 2-colored, there exists a monochromatic combinatorial line. That is, $\HJ(4, 2) \le 10^{11}$.
\end{theorem}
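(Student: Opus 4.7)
My approach is a Shelah-type reduction, pivoting on the known base case $\HJ(3,2) = 4$. I would partition the $n = 10^{11}$ coordinates into $M = 4$ equal-length blocks of length $L = n/4$, and try to lift a monochromatic combinatorial line from a cleverly constructed auxiliary $2$-coloring of $[3]^4$ back to $[4]^n$.

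\emph{Wildcards via a cube pigeonhole.} Within each block $b$ I would single out the focal configurations $e^b_k \in \{3,4\}^L$ consisting of $k$ fours followed by $L - k$ threes, for $k = 0, 1, \dots, L$. Using an iterated pigeonhole on the blocks, the goal is to produce indices $0 \le i_b < j_b \le L$ for every block $b$ such that, writing $W_b = \{i_b + 1, \dots, j_b\}$, simultaneously flipping the wildcard value in any subset $J \subseteq [4]$ of blocks from $3$ to $4$ preserves the coloring, for arbitrary values in $[3]$ assigned to the wildcards of the remaining blocks. This ``cube property'' turns each $W_b$ into a neutral interval between the values $3$ and $4$.

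\emph{Lifting the base case.} Given such wildcards, define $\chi' : [3]^4 \to [2]$ by $\chi'(a_1, a_2, a_3, a_4) = \chi(P)$, where $P \in [4]^n$ has block $b$ equal to $4$ on $\{1, \dots, i_b\}$, equal to $a_b$ on $W_b$, and equal to $3$ on $\{j_b + 1, \dots, L\}$. Since $\HJ(3,2) = 4$, the coloring $\chi'$ contains a monochromatic combinatorial line $\ell'$ with some moving set $J \subseteq [4]$ and fixed values $c'_b \in [3]$ for $b \notin J$. Take $\ell : [4] \to [4]^n$ to have moving coordinates $\bigcup_{b \in J} W_b$, with the other coordinates fixed consistently with $\ell'$. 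For $x \in [3]$ the point $\ell(x)$ is exactly the lift of $\ell'(x)$, so $\chi(\ell(1)) = \chi(\ell(2)) = \chi(\ell(3))$ by monochromaticity of $\ell'$. The point $\ell(4)$ differs from $\ell(3)$ only in that the wildcards of the blocks in $J$ are set to $4$ rather than $3$, so the cube property gives $\chi(\ell(4)) = \chi(\ell(3))$. Hence $\ell$ is a monochromatic combinatorial line in $[4]^n$.

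\emph{Main obstacle.} The quantitative heart of the proof lies entirely in the cube construction: a naive block-by-block pigeonhole would distinguish the $L + 1$ candidate indices in each block by their ``color-response functions'' over an unrestricted context of size up to $4^{3L}$, producing doubly exponential growth at every step and thereby reproducing Shelah's tower-function bound. To reach $10^{11}$, the argument must keep the context spaces very small -- plausibly by processing the blocks in an order that restricts attention only to the $[3]$-valued wildcard configurations already committed to in earlier blocks, and by exploiting $r = 2$ and $M = 4$ to flatten the recursion. I expect this careful quantitative bookkeeping, rather than the lift itself, to be the bulk of the work.
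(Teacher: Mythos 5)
There is a genuine gap, and it sits exactly where you placed your hopes: the ``cube property'' cannot be obtained by pigeonhole with four blocks of equal length $L = n/4$. To find $i_b < j_b$ among the $L+1$ candidates $e^b_0,\dots,e^b_L$ whose substitution is color-neutral, you must find two candidates whose color-response functions agree, and for the \emph{first} block you process, that response function is defined on the entire space of configurations that the other blocks might later occupy --- nothing is ``already committed'' when you start. Even after the reduction to wildcard values, the first block's pigeonhole runs over at least $2^{3\cdot 4^{\,L_2+L_3+L_4}}$ response functions, so its length must exceed a doubly exponential function of the remaining lengths; iterating this is precisely what forces Shelah's block lengths to form a tower and yields the $2\uparrow\uparrow 7$-type bound the paper's abstract cites as the previous record. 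Only the \emph{last} block's context is genuinely small (of order $3^3$). No way to ``flatten the recursion'' for $t=4$, $r=2$ is known, and your proposal does not supply one; it defers the entire difficulty to a step that, as far as anyone knows, fails. (A smaller issue: your lift also needs $\ell(4)$ and $\ell(3)$ to receive the same color when the blocks outside $J$ sit at arbitrary values $c_b'\in[3]$ \emph{on their wildcard intervals}, which is the full-strength cube property --- so you cannot weaken the pigeonhole to a restricted context without breaking the lifting step.)

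The paper takes an entirely different, non-Shelah route: it is a density-counting argument. One defines $q(k)$, the fraction of monochromatic collinear pairs on lines of length $k$, shows via an averaging over chains in embedded hypercubes $[2]^m$ that a weighted sum of the $q(k)$ is close to its maximum possible value (so $q(k)$ is near $\tfrac12$ for some $k$), and separately proves a parity lemma: fifteen explicit lines in $[4]^4$, each point of which lies on an even number of them, cannot all have a $3$--$1$ color split. Averaging that lemma over embeddings of $[4]^4$ into $[4]^n$ bounds the density $p_3(k)$ of $3$--$1$ lines away from $1$, and the identity $p_4(k) = \tfrac32\bigl(q(k) - \tfrac16 p_3(k) - \tfrac13\bigr)$ then forces a monochromatic line. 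All error terms are polynomial in $1/n$, which is what makes $n = 10^{11}$ attainable. If you want to pursue your own line, the place to invest effort is not bookkeeping within Shelah's framework but finding a mechanism that avoids the doubly exponential pigeonhole altogether.
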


\section{Setup}

Given a combinatorial line $\ell : [4] \to [4]^n$, we define its \emph{length} $|\ell|$ to be the number of coordinates $1 \le i \le n$ for which $\ell_i(x)$ varies with $x$. For example, the line given by $\ell(x) = (3,x,1,x,4)$ has length $2$. (To justify this terminology, note that $|\ell|$ is the Hamming distance between the two endpoints $\ell(1)$ and $\ell(4)$, or indeed between any two points of $\ell$.)

Fix a $2$-coloring of $[4]^n$. For each length $k$, we classify the combinatorial lines of length $k$ into three types, and count their densities:
\begin{itemize}
\item $p_2(k)$ is the fraction of lines of length $k$ which have 2 points of each color.
\item $p_3(k)$ is the fraction of lines of length $k$ which have 3 points of one color, and 1 of the other.
\item $p_4(k)$ is the fraction of monochromatic lines of length $k$.
\end{itemize}
We are also interested in the fraction of pairs of collinear points (on lines of length $k$) assigned the same color. On each line, there are 6 pairs of points. For lines counted by $p_2$, 2 pairs are monochromatic; for lines counted by $p_3$, 3 pairs are monochromatic; for lines counted by $p_4$, all 6 pairs are monochromatic. Therefore
\begin{equation}
	\label{eq:density}
	q(k) := \frac13 p_2(k) + \frac12 p_3(k) + p_4(k)
\end{equation}
counts the fraction of monochromatic pairs of points on lines of length $k$.

The grid $[4]^n$ contains large ``cliques'': sets of points in which any two are collinear. In such a clique, the number of monochromatic pairs is least when the colors are balanced, and even then is close to $\frac12$. Thus, we expect that for at least some lengths $k$, $q(k) > \frac12 - \epsilon$ for some $\epsilon$ that goes to $0$ with $n$. This intuition is correct in a way that we will make more precise.

If we could show the stronger statement that $q(k) > \frac12$ for some $k$, the proof would be complete: $p_4(k)$ occurs in \eqref{eq:density} with a coefficient greater than $\frac12$, so we would know that $p_4(k) > 0$, which means that a monochromatic line exists. 

Even if $q(k) < \frac12$, a large value of $q(k)$ gives partial information: either $p_4(k) > 0$, or else $p_3(k)$ is close to $1$. Therefore we can also prove that $p_4(k) > 0$ by showing that for some $k$, $q(k)$ is close to $\frac12$, but $p_3(k)$ is bounded away from $1$. More formally, we can solve \eqref{eq:density} for $p_4(k)$ (substituting $p_2(k) = 1 - p_3(k) - p_4(k)$) to get
\begin{equation}
	\label{eq:ultimate}
	p_4(k) = \frac32\left(q(k) - \frac16 p_3(k) - \frac13\right).
\end{equation}
We will prove that a monochromatic line exists by showing that the right-hand side of \eqref{eq:ultimate} is positive for some $k$.

\section{Showing that $q(k)$ is close to $\frac12$}

It is hopeless to show that $q(k)$ approaches $\frac12$ for any individual $k$. For example, the ``checkerboard'' coloring, which colors a point by the sum of its coordinates modulo 2, has $p_2(k) = 1$, and therefore $q(k) = \frac13$, for all odd $k$.  Instead, we prove an inequality for a weighted sum of the first $\kappa$ values of $q(k)$, where $\kappa$ is a parameter to be determined later.

\subsection{A bound for $n$-dimensional hypercubes}

We begin by considering collinear pairs in the hypercube $[2]^n$. Here, lines consist of only 2 points, and therefore $q(k)$, defined as before, simply counts monochromatic lines of length $k$.

\begin{lemma}
\label{lemma:hypercube}
For every $\kappa \le n$, whenever $[2]^n$ is $2$-colored,
\begin{equation}
	\label{eq:hypercube}
	\sum_{k=1}^\kappa (\kappa-k+1) q(k) \ge \frac{\kappa^2-1}{4} \left(1 - \kappa \sqrt{\frac{2}{\pi n}}\right).
\end{equation}
\end{lemma}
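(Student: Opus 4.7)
I will attack Lemma~\ref{lemma:hypercube} by a random-walk argument on $[2]^n$ combined with the elementary fact that any $2$-coloring of a set of $\kappa+1$ points has at least $(\kappa^2-1)/4$ monochromatic unordered pairs. Define the walk as follows: pick $x_0\in[2]^n$ uniformly at random and, independently, pick a uniformly random ordered $\kappa$-tuple $(i_1,\dots,i_\kappa)$ of \emph{distinct} coordinates in $[n]$; then set $x_t := x_0 \oplus e_{i_1} \oplus \cdots \oplus e_{i_t}$ for $1\le t\le \kappa$, where $e_i$ denotes the indicator vector of coordinate $i$. The first step is to verify that for every $0\le s<t\le\kappa$ the pair $(x_s,x_t)$ is uniformly distributed over ordered pairs of points in $[2]^n$ at Hamming distance $t-s$: $x_s$ is uniform because $x_0$ is, and marginally the symmetric difference $x_s\oplus x_t=e_{i_{s+1}}\oplus\cdots\oplus e_{i_t}$ corresponds to a uniform $(t-s)$-subset of $[n]$, by the symmetry of random $\kappa$-permutations.

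Identify the coloring with a function $f:[2]^n\to\{-1,+1\}$. The pair-uniformity just established means that $\Pr(f(x_s)=f(x_t))=q(t-s)$ for each $s<t$, so the expected number of monochromatic unordered pairs among $\{x_0,\dots,x_\kappa\}$ equals
\[
\sum_{0\le s<t\le\kappa} q(t-s)\;=\;\sum_{k=1}^{\kappa}(\kappa-k+1)\,q(k),
\]
which is exactly the left-hand side of \eqref{eq:hypercube}. On the other hand, consecutive steps of the walk flip distinct coordinates, so the $\kappa+1$ visited points are all distinct; any $\{-1,+1\}$-coloring of them then has $\binom{a}{2}+\binom{\kappa+1-a}{2}\ge(\kappa^2-1)/4$ monochromatic pairs, the minimum being attained when $a$ and $\kappa+1-a$ are as balanced as possible. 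Taking expectations gives $\sum_{k=1}^{\kappa}(\kappa-k+1)q(k)\ge(\kappa^2-1)/4$, which is already stronger than what the lemma asserts.

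Under this plan the factor $1-\kappa\sqrt{2/(\pi n)}$ is not actually needed; I suspect the author records the weaker form because their argument passes through a variant of the walk -- perhaps a with-replacement walk, or an averaging over random $\kappa$-dimensional subcubes -- in which the pair $(x_s,x_t)$ is only approximately uniform at distance $t-s$, and one must absorb a correction of order $\binom{n}{\lfloor n/2\rfloor}/2^n\approx\sqrt{2/(\pi n)}$. The only step that requires any real care in my plan is the uniformity claim in the first paragraph; once that is in hand, the remainder of the lemma is pure counting, and I would anticipate that uniformity check to be the main obstacle.
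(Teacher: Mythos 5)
There is a genuine gap, and it sits exactly at the step you flagged as the only one needing care: the uniformity claim proves uniformity over the \emph{wrong} set of pairs. In $[2]^n$ the quantity $q(k)$ is the monochromatic fraction of combinatorial \emph{lines} of length $k$, and a pair of points at Hamming distance $k$ lies on a combinatorial line only when the two points are comparable, i.e.\ one of them carries the value $1$ in every coordinate where they differ (for example, $(1,2)$ and $(2,1)$ are at distance $2$ but lie on no line). There are $\binom{n}{k}2^{n-k}$ lines of length $k$ but $\binom{n}{k}2^{n-1}$ unordered pairs at distance $k$, a factor $2^{k-1}$ more. Your walk $x_t = x_0 \oplus e_{i_1}\oplus\cdots\oplus e_{i_t}$ flips coordinates in both directions, so $(x_s,x_t)$ is indeed uniform over \emph{all} ordered pairs at distance $t-s$; consequently $\Pr(f(x_s)=f(x_t))$ is the monochromatic density among all distance-$(t-s)$ pairs, not $q(t-s)$, and the identity on which the whole argument rests fails. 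The distinction is not cosmetic for the paper: Lemma~\ref{lemma:collinear-pairs} applies Lemma~\ref{lemma:hypercube} inside subcubes $\{a,b\}^m$, where collinearity in $[4]^n$ corresponds precisely to comparability in $[2]^m$, and the final step via \eqref{eq:ultimate} needs a bound on $q(k)$ over lines; a bound over arbitrary Hamming-distance pairs does not feed into that chain.

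Your closing suspicion that the factor $1-\kappa\sqrt{2/(\pi n)}$ is an artifact is the symptom of the same conflation, and it is backwards. To generate only collinear pairs you must make the walk monotone (a chain in the Boolean lattice), which is exactly what the paper does; but then $x_s$ is no longer uniform, and $(x_s,x_t)$ is no longer uniform over lines of length $t-s$: the lower endpoint of a uniformly random line of length $k$ has weight distributed as $\Bin(n-k,\tfrac12)$, a requirement that cannot hold simultaneously for all gaps $k$ along a single chain, so the induced distribution over lines is biased toward the middle layers with binomial weights. The paper's proof handles this bias by bounding the minimum weight $w_h^*$ over each window of $\kappa+1$ chain positions and paying $\kappa\binom{n}{\lfloor n/2\rfloor}2^{-n}\le\kappa\sqrt{2/(\pi n)}$ for the skipped central terms; that is where the error factor comes from, and it is genuinely needed by this method. (The clique-counting and the bookkeeping $\sum_{s<t}q(t-s)=\sum_k(\kappa-k+1)q(k)$ in your write-up are fine; they coincide with the paper's use of the $(\kappa^2-1)/4$ bound on monochromatic pairs in a set of $\kappa+1$ pairwise collinear points.)
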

\begin{proof}
The hypercube $[2]^n$ is the union of $n!$ chains of length $n+1$: maximal sequences of points $C_0, \dots, C_n$ such that any two points $C_i$ and $C_j$ are collinear. For each permutation $\sigma$ of $[n]$, we obtain such a chain by letting $C_i(\sigma)$ be the point with $2$ in the coordinates $\sigma(1), \sigma(2), \dots, \sigma(i)$ and $1$ in all others. The line through $C_i(\sigma)$ and $C_j(\sigma)$, for $i<j$, has length $j-i$. Any permutation $\sigma'$ such that $\{\sigma(1), \dots, \sigma(i)\} = \{\sigma'(1), \dots, \sigma'(i)\}$ and $\{\sigma(i+1), \dots, \sigma(j)\} = \{\sigma'(i+1), \dots, \sigma'(j)\}$ will satisfy $C_i(\sigma) = C_i(\sigma')$ and $C_j(\sigma) = C_j(\sigma')$; therefore $C_i(\sigma)$ and $C_j(\sigma)$ occur together in $i! (j-i)! (n-j)!$ chains.

Fix any 2-coloring of $[2]^n$. Let $Q_{i,j}(\sigma) = 1$ if $C_i(\sigma)$ and $C_j(\sigma)$ are given the same color, and 0 otherwise. Then the total number of monochromatic points on all ${n \choose k}2^{n-k}$ lines of length $k$ is given by
\begin{align*}
{n \choose k} 2^{n-k} q(k) &= \sum_{\sigma} \sum_{i=0}^{n-k} \frac{Q_{i,i+k}(\sigma)}{i! k! (n-i-k)!} \\
&= \frac{1}{n!}  {n \choose k} \sum_{\sigma}\sum_{i=0}^{n-k} {n-k \choose i} Q_{i,i+k}(\sigma).
\end{align*}
Therefore
\begin{equation}
	\label{eq:chain-average}
q(k) = \frac1{n!} \sum_{\sigma} \left(\sum_{i=0}^{n-k} \frac{{n-k \choose i}}{2^{n-k}} Q_{i,i+k}(\sigma)\right).
\end{equation}
Let
\[
	q(k,\sigma) = \sum_{i=0}^{n-k} \frac{{n-k \choose i}}{2^{n-k}} Q_{i,i+k}(\sigma).
\]
Then equation~\eqref{eq:chain-average} shows that $q(k)$ is the average of $q(k, \sigma)$ over all $\sigma$. To complete the proof of Lemma~\ref{lemma:hypercube}, it suffices to show that for each permutation $\sigma$, inequality~\eqref{eq:hypercube} holds with $q(k, \sigma)$ in place of $q(k)$. 

Define $w_{i,j} :=\frac{{n - (j-i) \choose i}}{2^{n-(j-i)}}$, a shorthand for the coefficient of $Q_{i,j}(\sigma)$ in $q(j-i, \sigma)$. We have
\begin{equation}
	\label{eq:sum-of-cliques}
	\sum_{k=1}^\kappa (\kappa-k+1) q(k, \sigma) \ge \sum_{h=0}^{n-\kappa} \left(\sum_{h \le i < j \le h+k} w_{i,j} Q_{i,j}(\sigma)\right)
\end{equation}
because each term $w_{i,j} Q_{i,j}(\sigma)$ occurs in the right-hand side of \eqref{eq:sum-of-cliques} for at most $\kappa - (j-i) + 1$ values of $i$; fewer if $j < \kappa $ or if $i > n- \kappa$. 

Each sum
\[
	\sum_{h \le i<j \le h+k} w_{i,j} Q_{i,j}(\sigma)
\]
counts (with varying weights) the number of monochromatic pairs among the $\kappa+1$ points $C_h(\sigma)$, $C_{h+1}(\sigma)$, \dots, $C_{h+\kappa}(\sigma)$, any two of which are collinear. There must be at least $2{(\kappa+1)/2 \choose 2} = \frac{\kappa^2-1}{4}$ such pairs; their number is minimized if half the points receive one color and half receive the other. We do not know which weights correspond to those pairs. However, at the very least, we have the lower bound
\[
	\sum_{h \le i < j \le h+k} w_{i, j} Q_{i, j}(\sigma) \ge \frac{\kappa^2-1}{4}  w_h^*,
\]
where $w_h^*$ is the least of all weights $w_{i, j}$ for $h \le i \le j \le h+\kappa$. (We allow $i=j$ to simplify calculations later, though such a weight does not occur in the sum.) Substituting this lower bound into inequality~\eqref{eq:sum-of-cliques}, we get
\[
	\sum_{k=1}^\kappa (\kappa-k+1) q(k, \sigma) \ge \frac{\kappa^2-1}{4}  \sum_{h=0}^{n-\kappa} w_h^*.
\]
It remains to find a lower bound for the sum of the $w_h^*$.

From Pascal's identity it follows that $w_{i,j} = \frac12 (w_{i-1,j} + w_{i,j+1})$. Therefore each coefficient $w_{i,j}$ is a weighted average of some of the coefficients
\[
	w_{h,h},  w_{h,h+1}, \dots, w_{h,h+\kappa}, w_{h+1,h+\kappa}, \dots, w_{h+\kappa,h+\kappa}.
\]
Since all of these coefficients are included in the minimum defining $w_h^*$, we know that $w_h^*$ must be one of these. Furthermore, this sequence is unimodal, so $w_h^* = \min\{w_{h,h}, w_{h+\kappa,h+\kappa}\}$. 

The sequence $w_{0,0}, w_{1,1}, \dots, w_{n,n}$ is just the sequence $\frac{{n \choose 0}}{2^n}, \frac{{n \choose 1}}{2^n}, \dots, \frac{{n \choose n}}{2^n}$, and is also unimodal. So the sum $\sum_{h=0}^{n-\kappa} w_h^*$ will begin by summing $w_{h,h}$ and eventually switch to summing $w_{h+\kappa,h+\kappa}$, skipping some $\kappa$ terms. Therefore
\[
	\sum_{h=0}^{n-\kappa} w_h^* \ge \sum_{h=0}^n w_{h,h} - \kappa \max_{0\le h \le n} w_{h,h} = 1 - \kappa \frac{{n \choose \lfloor n/2 \rfloor}}{2^n} \ge 1 - \kappa \sqrt{\frac{2}{\pi n}}.
\]
It follows that
\[
	\sum_{k=1}^\kappa (\kappa-k+1) q(k, \sigma) \ge \frac{\kappa^2-1}{4}  \left(1 - \kappa \sqrt{\frac{2}{\pi n}}\right)
\]
and by averaging this inequality over all permutations $\sigma$ and applying equation~\eqref{eq:sum-of-cliques}, we obtain the desired inequality~\eqref{eq:hypercube}.
\end{proof}

\subsection{Extending the bound to the grid $[4]^n$}

By giving away another error term, we can extend Lemma~\ref{lemma:hypercube} to all collinear pairs in $[4]^n$.

\begin{lemma}
	\label{lemma:collinear-pairs}
For every $\kappa \le \frac{n}{4}$, whenever $[4]^n$ is 2-colored,
\[
	\sum_{k=1}^\kappa (\kappa-k+1) q(k) \ge \frac{\kappa^2-1}{4} \left(1 - e^{-(n-\kappa)/8} - 3 \kappa \sqrt{ \frac{2}{\pi(n-\kappa)}}\right).
\]
\end{lemma}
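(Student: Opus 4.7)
My plan is to extend Lemma~\ref{lemma:hypercube} by extracting a family of sub-hypercubes from $[4]^n$ and averaging the hypercube bound over that family. For each pair $T = \{a,b\} \in \binom{[4]}{2}$ and each $g : [n] \to [4]$, let $U(T,g) = \{i : g(i) \in T\}$ and define
\[
H(T,g) = \{x \in [4]^n : x_i = g(i) \text{ for } i \notin U(T,g), \ x_i \in T \text{ for } i \in U(T,g)\}.
\]
This is a sub-hypercube of $[4]^n$ isomorphic to $[2]^{m(T,g)}$ where $m(T,g) = |U(T,g)|$, and the coloring of $[4]^n$ restricts to a $2$-coloring of it. Collinear pairs of points inside $H(T,g)$ are also collinear pairs in $[4]^n$ of the same length, so Lemma~\ref{lemma:hypercube} applied inside $H(T,g)$ gives usable information about grid pairs.

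First I would apply Lemma~\ref{lemma:hypercube} to each $H(T,g)$ whose dimension $m(T,g)$ is at least $\kappa$, obtaining
\[
\sum_{k=1}^\kappa (\kappa-k+1)\, q_{T,g}(k) \geq \frac{\kappa^2-1}{4}\left(1 - \kappa \sqrt{\frac{2}{\pi\, m(T,g)}}\right),
\]
where $q_{T,g}(k)$ is the monochromatic-pair density inside $H(T,g)$; for $(T,g)$ with $m(T,g) < \kappa$ I would use the trivial lower bound $0$. Next I would average over uniformly random $(T,g) \in \binom{[4]}{2} \times [4]^n$ to transfer this hypercube bound to the grid. On the left, each collinear pair in $[4]^n$ is contained in a controlled number of $H(T,g)$, and a binomial-identity computation over the auxiliary set $V = \{i \notin S : x_i \in T\}$ (using identities like $\sum_{W \subseteq V} 2^{|W|} = 3^{|V|}$) should show that, after the correct normalization, the average of $q_{T,g}(k)$ equals $q(k)$. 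On the right, $m(T,g) \sim \Bin(n,1/2)$ under a uniform choice of $(T,g)$, so Hoeffding's inequality gives that $m(T,g) \le (n-\kappa)/4$ with probability at most $e^{-(n-\kappa)/8}$, supplying the first error term; on the complementary event, $\kappa \sqrt{2/(\pi\, m(T,g))} \leq 2\kappa \sqrt{2/(\pi(n-\kappa))}$, matching the second error term up to the factor $3$, which absorbs slack from the averaging and the gap between $(n-\kappa)/4$ and $n-\kappa$.

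The main obstacle I expect is the averaging step. Because the $H(T,g)$ have varying dimensions, the number of length-$k$ line-pairs they contain, $\binom{m(T,g)}{k}\, 2^{m(T,g)-k}$, is not uniform across sub-hypercubes, and the naive uniform average of $q_{T,g}(k)$ does not immediately recover $q(k)$. Choosing a weighting that does recover $q(k)$ (essentially weighting each sub-hypercube by its share of grid pairs of length $k$), while simultaneously preserving a clean right-hand side that collapses to the stated error terms, is where the bulk of the calculation should go.
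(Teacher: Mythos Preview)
Your plan is essentially the paper's: decompose $[4]^n$ into sub-hypercubes indexed by a pair $T=\{a,b\}$ and the values on the fixed coordinates, apply Lemma~\ref{lemma:hypercube} inside each, and average. The paper groups your $H(T,g)$ by their dimension $m$ (what it calls the \emph{type} of a collinear pair) and writes the exact averaging identity $q(k)=\sum_m \binom{n-k}{m-k}2^{-(n-k)}q(k,m)$, which is the explicit form of the computation you sketch.

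The obstacle you flag is exactly the right one, but your proposed resolution is slightly off: there is no single weighting that both recovers $q(k)$ exactly for every $k\le\kappa$ and is $k$-independent on the right-hand side, so ``choosing a weighting that does recover $q(k)$'' cannot work as stated. The paper instead gives up on equality and replaces the $k$-dependent weight $\binom{n-k}{m-k}2^{-(n-k)}$ by the $k$-independent lower bound $\min\bigl\{\binom{n-\kappa}{m-\kappa},\binom{n-\kappa}{m}\bigr\}2^{-(n-\kappa)}$, justified via iterated Pascal's identity and unimodality, obtaining $q(k)\ge\sum_{m\ge n/4}(\cdots)\,q(k,m)$. Now one can sum in $k$ first and then in $m$: the $\kappa$ central binomial terms lost when the minimum switches branches supply the extra factor $\kappa\sqrt{2/\pi(n-\kappa)}$, and the truncation at $m\ge n/4$ supplies the $e^{-(n-\kappa)/8}$ term via Chernoff on $\Bin(n-\kappa,\tfrac12)$ rather than $\Bin(n,\tfrac12)$.
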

\begin{proof}
We say that a collinear pair of points $\ell(a), \ell(b)$ for some line $\ell$ and some $a,b \in [4]$ has \emph{type~$m$} if there are $m$ coordinates in total in which either point is equal to $a$ or $b$; in other words, $\ell_i(x)$ is the constant $a$ or $b$ for $m - |\ell|$ values of $i$. We define $q(k, m)$ to be the fraction of collinear pairs of type~$m$ and on lines of length~$k$ which are monochromatic.

The type of a collinear pair matters because a collinear pair of type~$m$ is contained in the $m$-dimensional subcube of $[4]^n$ obtained by letting all $m$ coordinates of either point which are equal to $a$ or $b$ vary freely between the two values. In this $m$-dimensional subcube, two points are collinear if and only if the corresponding points of $\{a, b\}^m$ (obtained by dropping all coordinates not equal to $a$ or $b$) are collinear, so it has the structure of the hypercube $[2]^m$. The fraction of collinear pairs in this subcube which are monochromatic satisfies Lemma~\ref{lemma:hypercube}. By averaging over all $m$-dimensional subcubes, which cover each collinear pair of type~$m$ exactly once, we obtain
\begin{equation}
	\label{eq:type-m}
	\sum_{k=1}^\kappa (\kappa-k+1) q(k,m) \ge \frac{\kappa^2-1}{4} \left(1 - \kappa \sqrt{\frac{2}{\pi m}}\right).
\end{equation}

There are $6{n \choose k}4^{n-k}$ collinear pairs on lines of length~$k$; of them, ${m \choose k}2^{m-k}$ are in each $m$-dimensional subcube, and there are $6{n \choose m}2^{n-m}$ such subcubes. So the fraction of lines of length~$k$ which have type $m$ is
\[
	\frac{{m \choose k}2^{m-k} {n \choose m}2^{n-m}}{{n \choose k} 4^{n-k}} = \frac{{n-k \choose m-k}}{2^{n-k}}.
\]
Therefore we may express $q(k)$ as a weighted average of all the $q(k,m)$ by
\begin{equation}
	\label{eq:m-average}
	q(k) = \sum_{m=k}^n \frac{{n-k \choose m-k}}{2^{n-k}} q(k,m).
\end{equation}

Unfortunately, the weight of $q(k,m)$ in this average depends on $k$ as well as $m$, which prevents us from simply averaging inequality~\eqref{eq:type-m} over all $m$. To fix this problem, we replace the weights in~\eqref{eq:m-average} by lower bounds independent of $k$, which will result in an inequality relating $q(k)$ to $q(k,m)$. (We will assume that $1 \le k \le \kappa \le \frac n4$.)

For $m \le \frac{n}{4}$, our lower bound will be $0$: we drop all terms where $m$ is too low, because the statement of inequality~\eqref{eq:type-m} is too weak in such cases. Otherwise, we want to replace the weight by the minimum of ${n-k \choose m-k}2^{-(n-k)}$ over all $k \le \kappa$.

From Pascal's identity, we have ${n \choose r}2^{-n} = \frac12\left({n-1 \choose r-1} 2^{-(n-1)} + {n-1 \choose r} 2^{-(n-1)}\right)$. Applying this iteratively, we can express each ${n-k \choose m-k}2^{-(n-k)}$ as a weighted average of some of 
\[
	\frac{{n-\kappa \choose m-\kappa}}{2^{n-\kappa}}, \frac{{n-\kappa \choose m-\kappa+1}}{2^{n-\kappa}}, \dots, \frac{{n-\kappa \choose m}}{2^{n-\kappa}}.
\]
This sequence is unimodal, so the minimum is achieved at one of the endpoints, and we may replace equation~\eqref{eq:m-average} by
\begin{equation}
	\label{eq:m-lower-bound}
	q(k) \ge \sum_{m=n/4}^n \frac{\min\left\{ {n-\kappa \choose m-\kappa}, {n-\kappa \choose m}\right\}}{2^{n-\kappa}} q(k,m).
\end{equation}

Sum the inequality~\eqref{eq:type-m} over all $m \ge \frac{n}{4}$ with weights as in inequality~\eqref{eq:m-lower-bound}. The right-hand side of~\eqref{eq:m-lower-bound} will be smallest when $m = \frac{n}{4}$, so we may use that value for all $m$. We obtain
\begin{equation}
	\label{eq:nearly-there}
	\sum_{k=1}^\kappa (\kappa-k+1) q(k) \ge \frac{\kappa^2-1}{4} \left(1 - \kappa\sqrt{\frac{2}{\pi n/4}}\right) \sum_{m=n/4}^n \frac{\min\left\{ {n-\kappa \choose m-\kappa}, {n-\kappa \choose m}\right\}}{2^{n-\kappa}}.
\end{equation}
It remains to simplify the right-hand side. 

The omission of the first $n/4$ terms of the sum in~\eqref{eq:nearly-there} results in an error of $\sum_{m<n/4} {n-\kappa \choose m-\kappa} 2^{-(n-\kappa)}$, which is simply the binomial probability $\Pr[\Bin(n-\kappa, \frac12) < \frac n4 - \kappa]$. By the Chernoff bound (see, e.g., \cite{alon08}),
\[
	\Pr\left[\Bin(n-\kappa, \tfrac12) < \frac n4 - \kappa\right] < \Pr\left[\Bin(n-\kappa, \tfrac12) < \frac{n-\kappa}{4}\right] \le \exp \left(-\frac{n-\kappa}{8}\right).
\]

With these initial terms, the sum  in~\eqref{eq:nearly-there} would be equal to 1, except for skipping $\kappa$ terms near the middle, which occurs when the minimum switches from selecting ${n-\kappa \choose m-\kappa}$ to selecting ${n-\kappa \choose m}$. Each of these terms is at most ${n-\kappa \choose (n-\kappa)/2} 2^{-(n-\kappa)} \le \sqrt{\frac{2}{\pi (n-\kappa)}}$, so we lose at most $\kappa$ times this quantity. Therefore the sum in inequality~\eqref{eq:nearly-there} satisfies
\[
	\sum_{m=n/4}^n \frac{\min\left\{ {n-\kappa \choose m-\kappa}, {n-\kappa \choose m}\right\}}{2^{n-\kappa}} \ge 1 - e^{-(n-\kappa)/8} - \kappa \sqrt{ \frac{2}{\pi(n-\kappa)}}.
\]
Combining the two error terms, we complete the proof.
\end{proof}

\section{Showing that $p_3(k)$ cannot be arbitrarily close to 1}

In this section, we say that a combinatorial line in a 2-colored grid $[4]^n$ is \emph{odd} if it has an odd number of points of each color. That is, an odd line has 3 points of one color and 1 point of the other, so it is exactly the type of line counted by $p_3(k)$.

To bound $p_3(k)$ away from 1, we first find a set of lines in $[4]^4$ which cannot all be odd:

\begin{lemma}
\label{lemma:fifteen}
Whenever $[4]^4$ is 2-colored, the $15$ lines
\begin{align*}
	\ell^1(x) &= (x, 2, 3, 4) & \ell^6(x) &= (x, 2, x, 4)  & \ell^{11}(x) &= (x, x, x, 4) \\
	\ell^2(x) &= (1, x, 3, 4) & \ell^7(x) &= (x, 2, 3, x) & \ell^{12}(x) &= (x, x, 3, x) \\
	\ell^3(x) &= (1, 2, x, 4) & \ell^8(x) &= (1, x, x, 4) & \ell^{13}(x) &= (x, 2, x, x) \\
	\ell^4(x) &= (1, 2, 3, x) & \ell^9(x) &= (1, x, 3, x) & \ell^{14}(x) &= (1, x, x, x) \\
	\ell^5(x) &= (x, x, 3, 4) & \ell^{10}(x) &= (1, 2, x, x) & \ell^{15}(x) &= (x,x,x,x)
\end{align*}
cannot all be odd.
\end{lemma}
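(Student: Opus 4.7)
The plan is a parity argument modulo~2. Identify the 2-coloring with a function $f : [4]^4 \to \mathbb{F}_2$; then a line $\ell$ is odd precisely when $\sum_{x=1}^{4} f(\ell(x)) \equiv 1 \pmod 2$. If all 15 lines of the lemma were odd, summing their 15 congruences and regrouping by point gives
\[
	\sum_{p \in [4]^4} c(p)\, f(p) \equiv 15 \equiv 1 \pmod 2,
\]
where $c(p)$ is the number of pairs $(i,x)$ with $\ell^i(x) = p$. The goal is to show that $c(p)$ is even for every $p \in [4]^4$, which contradicts this congruence for any coloring $f$ whatsoever.

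The key observation is that the 15 lines admit a clean parametrization by the non-empty subsets $S \subseteq [4]$: $\ell^S(x)$ has value $x$ in the coordinates of $S$ and the fixed value $i$ in each coordinate $i \notin S$. In particular every line is ``anchored'' at the reference point $(1,2,3,4)$. For $p \in [4]^4$ set $T = T(p) := \{i : p_i \neq i\}$. Then $\ell^S$ passes through $p$ iff $T \subseteq S$ and there is a common $x \in [4]$ with $p_i = x$ for all $i \in S$; for indices $i \in S \setminus T$ this forces $x = i$, so $|S \setminus T| \le 1$.

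A short case analysis on $T$ and on the values $\{p_i : i \in T\}$ then gives $c(p) \in \{0,2,4\}$ in every case. If $T = \emptyset$, only the four singleton subsets $S$ work, giving $c(p) = 4$. If the values $p_i$ for $i \in T$ are not all equal, no common $x$ exists and $c(p) = 0$. Otherwise these values share a common value $v$, which must lie in $[4] \setminus T$ (since $v = p_i \neq i$ for each $i \in T$ forces $v \notin T$), and exactly the two lines $\ell^T$ and $\ell^{T \cup \{v\}}$ contain~$p$, giving $c(p) = 2$. I don't anticipate a genuine obstacle here; the only subtlety is recognizing the subset parametrization of the 15 lines, after which the point-counting and the parity contradiction fall out cleanly.
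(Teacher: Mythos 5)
Your proof is correct and takes essentially the same approach as the paper: both hinge on the observation that every point of $[4]^4$ lies on an even number ($0$, $2$, or $4$) of the fifteen lines, so summing the mod-$2$ point counts over all lines forces at least one line to be non-odd. Your parametrization by nonempty subsets $S \subseteq [4]$ is just a cleaner bookkeeping for the paper's case analysis on the unique coordinate with $x_i = i = x$.
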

\begin{proof}
A key observation is that each point of $[4]^4$ lies on an even number of these lines. The point $(1,2,3,4)$ lies on the 4 lines of length $1$, and no other. Take any other point $(x_1,x_2,x_3,x_4)$ expressible as $\ell^j(x)$ for some index $j$ and some $x \in \{1,2,3,4\}$. Any coordinate $i$ where $x_i \ne  i$ must be a variable coordinate of $\ell^j$; any coordinate $i$ where $x_i = i \ne x$ must be a constant coordinate of $\ell^j$. There is always exactly one coordinate where $x_i = i = x$, so there are 2 choices for $j$, depending on whether that coordinate is variable or constant.

If $[4]^4$ is $2$-colored, choose either of the colors, and add up the number of points of that color on each of the fifteen lines. This total must be even, because each point is counted an even number of times. However, 15 odd numbers cannot add up to an even total, so one of the lines must contribute an even number. Therefore not all 15 lines can be odd.
\end{proof}

Structures isomorphic to the set of lines $\ell^1, \dots, \ell^{15}$ occur many times in $[4]^n$, and in each such structure at most $\frac{14}{15}$ of the lines are odd. So our next step is to show that by (more or less) averaging over all such structures, we get an upper bound of $\frac{14}{15}$ for the overall densities of odd lines of certain lengths, up to an error term.

\begin{lemma}
\label{lemma:k-embedding}
For every $k \le \frac n4$,  whenever $[4]^n$ is 2-colored,
\begin{equation}
	\label{eq:embedding-average}
	\left(1 - \frac{16k^2}{n}\right)\left(\frac{4}{15} p_3(k) + \frac{6}{15} p_3(2k) + \frac{4}{15} p_3(3k) + \frac1{15} p_3(4k)\right) \le \frac{14}{15}.
\end{equation}
\end{lemma}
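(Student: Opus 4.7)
Fix $k\le n/4$. An \emph{embedding} is an ordered 4-tuple of pairwise disjoint $k$-subsets $(B_1, B_2, B_3, B_4)$ of $[n]$ together with a choice of value in $[4]$ for each of the remaining $n-4k$ coordinates. Each embedding induces 15 combinatorial lines $\tilde\ell^1,\dots,\tilde\ell^{15}$ in $[4]^n$: the line $\tilde\ell^i$ is obtained from $\ell^i$ (as in Lemma~\ref{lemma:fifteen}) by replacing the coordinate $j\in[4]$ with all of block $B_j$, so $\tilde\ell^i$ has length $|S_i|\cdot k$ where $S_i$ is the variable-coordinate set of $\ell^i$. Since the 15 embedded lines together with the constants $1,2,3,4$ on blocks $B_1,B_2,B_3,B_4$ exactly replicate the setup of Lemma~\ref{lemma:fifteen} inside a 4-dimensional subcube of $[4]^n$, at most 14 of the $\tilde\ell^i$ are odd. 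Averaging over a uniformly random embedding gives $\sum_{i=1}^{15}\Pr[\tilde\ell^i\text{ is odd}]\le 14$.

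There are $\binom{4}{j}$ indices $i$ with $|S_i|=j$, so if the embedding induced the uniform distribution on lines of length $|S_i|\cdot k$ this would read $4p_3(k)+6p_3(2k)+4p_3(3k)+p_3(4k)\le 14$, which divided by $15$ is the lemma without the $(1-16k^2/n)$ factor. A direct count shows instead that $\Pr[\tilde\ell^{(S)}=\ell]\propto\prod_{j'\notin S}\binom{n_{j'}(\ell)}{k}$ for lines $\ell$ of length $jk$ with $n_v(\ell)$ constant coordinates of value $v$, so after summing over $|S|=j$ the effective weight on $\ell$ is the elementary symmetric polynomial $e_{4-j}(\binom{n_1(\ell)}{k},\dots,\binom{n_4(\ell)}{k})$. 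Writing $\bar P_j:=\binom{4}{j}^{-1}\sum_{|S|=j}\Pr[\tilde\ell^{(S)}\text{ is odd}]$, the plan is to show $\bar P_j\ge (1-16k^2/n)p_3(jk)$ for each $j$. For $j=4$ the weight $e_0=1$ is constant and $\bar P_4=p_3(4k)$ exactly; for $j=3$, convexity of $n\mapsto\binom{n}{k}$ gives by Jensen's inequality the pointwise bound $\sum_v\binom{n_v(\ell)}{k}\ge 4\binom{(n-3k)/4}{k}$, and comparing this with the mean weight via the identity $\prod_{t=0}^{k-1}\bigl(1-\tfrac{3t}{n-3k-t}\bigr)\ge 1-6k^2/n$ yields $\bar P_3\ge (1-6k^2/n)p_3(3k)\ge (1-16k^2/n)p_3(3k)$.

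\textbf{Main obstacle.} For $j=1,2$, the polynomial $e_{4-j}$ is Schur-concave and vanishes on ``unbalanced'' lines whose constants concentrate on too few values, precluding a pointwise Jensen-type lower bound. The plan is to decompose the odd lines of length $jk$ into a ``balanced'' part, where every $n_v(\ell)$ lies within $O(\sqrt n)$ of $(n-jk)/4$ and a local convexity estimate again recovers a $(1-O(k^2/n))$ factor, and an ``unbalanced'' tail. We may restrict to $k<\sqrt n/4$ throughout, since outside this regime $1-16k^2/n\le 0$ and the lemma is vacuous; in this range a multinomial Chernoff bound for the random constants shows that the unbalanced tail comprises only an $\exp(-\Omega(n))$ fraction of lines of length $jk$, which is absorbed into the slack between the $6k^2/n$ estimate obtained for balanced lines and the target $16k^2/n$. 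Combining the balanced main term with the Chernoff-controlled tail and verifying that the constants assemble into the factor $16$ is the technical core of the argument.
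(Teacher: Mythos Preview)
Your setup --- $k$-embeddings of the fifteen-line configuration from Lemma~\ref{lemma:fifteen}, followed by averaging the ``at most $14$ odd'' bound over all embeddings --- is exactly how the paper begins, and your identification of the weight on a line $\ell$ of length $jk$ as (a multinomial factor times) $e_{4-j}\bigl(\binom{n_1(\ell)}{k},\dots,\binom{n_4(\ell)}{k}\bigr)$ matches the paper's multiplicity computation term for term.

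Where you diverge from the paper is the $j\in\{1,2\}$ analysis. The paper does \emph{not} do a balanced/unbalanced split: it simply asserts that for each $j$ the multiplicity is \emph{minimized} when $n_1,\dots,n_4$ are as equal as possible, plugs in $n_v=(n-jk)/4$, and proceeds with the resulting uniform lower bound $M(jk)$. Your observation that $e_2$ and $e_3$ are Schur-concave is correct, and it shows that this step in the paper is not justified as written: the multiplicity is zero whenever some $n_v<k$ (for instance with $k=1$ and $\sum n_v=4$ one has $e_3(1,1,1,1)=4$ but $e_3(4,0,0,0)=0$), so the balanced point is certainly not the minimum. In other words, you have correctly located a gap in the paper's argument rather than merely chosen a different route.

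That said, your proposed repair is only a sketch, and one step needs more care. The Chernoff decomposition you describe naturally produces a bound of the shape
\[
\bar P_j \;\ge\; (1-c\,k^2/n)\,p_3(jk)\;-\;e^{-\Omega(n)},
\]
the multiplicative loss coming from comparing the weight on a balanced line to the mean weight, and the additive loss being the total uniform mass of unbalanced lines. Converting this to the purely multiplicative form $\bar P_j\ge(1-16k^2/n)\,p_3(jk)$ fails when $p_3(jk)$ is itself as small as $e^{-\Omega(n)}$, and nothing in your argument rules that out (the adversary could make exactly the unbalanced lines odd). For the downstream application in Section~5 the additive term is harmless --- one only needs the weighted sum of the $p_3(jk)$ bounded by $\tfrac{14}{15}$ plus something negligible --- but to prove the lemma exactly as stated you must either argue at the level of the whole sum $4\bar P_1+6\bar P_2+4\bar P_3+\bar P_4\le14$ (so that the four additive errors are absorbed jointly into the single factor $1-16k^2/n$, using $p_3\le1$), or accept a conclusion of the slightly weaker form $(1-16k^2/n)\bigl(\tfrac{4}{15}p_3(k)+\cdots\bigr)\le \tfrac{14}{15}+e^{-\Omega(n)}$.
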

\begin{proof}
Fix a 2-coloring of $[4]^n$ and some $k \le \frac n4$. Let a \emph{$k$-embedding} of $[4]^4$ into $[4]^n$ be a function $L : [4]^4 \to [4]^n$ such that for each coordinate $1 \le i \le n$, $L_i$ is either constant or given by $L_i(x_1, x_2, x_3, x_4) = x_j$ for some $j \in \{1,2,3,4\}$. Moreover, we require that for each $j$, there are exactly $k$ coordinates in which $L_i$ varies with $x_j$. Let $\mathcal L_k$ be the set of all $k$-embeddings $[4]^4 \to [4]^n$.

Each $L \in \mathcal L_k$ induces a $2$-coloring of $[4]^4$, by taking the preimage under $L$ of the coloring of $[4]^n$. Moreover, a line $\ell : [4] \to [4]^4$ corresponds to a line $L \circ \ell : [4] \to [4]^n$, with $|L \circ \ell| = k |\ell|$, which is odd if and only if $\ell$ is odd in the induced coloring.

Count the number of odd lines $L \circ \ell^j$, where $L \in \mathcal L_k$ and $\ell^j$ is one of the 15 lines of Lemma~\ref{lemma:fifteen}. For a fixed $L$, at most 14 of the lines $L \circ \ell^j$ are odd; therefore we count at most $14|\mathcal L_k|$ odd lines total. 

Let $P_3(k)$ be the number of odd lines of length $k$ in $[4]^n$, related to the density $p_3(k)$ by $P_3(k) = {n \choose k} 4^{n-k} p_3(k)$. If each line of length $k$ could be expressed $M(k)$ times as $L \circ \ell^j$, we would have the inequality
\begin{equation}
	\label{eq:odd-line-symbolic}
	M(k) P_3(k) + M(2k) P_3(2k) + M(3k) P_3(3k) + M(4k) P_3(4k) \le 14 |\mathcal L_k|.
\end{equation}
Unfortunately, the number of ways to express a line $\ell : [4] \to [4]^n$ as $L \circ \ell^j$ depends on $\ell$; specifically, on the number of coordinates of $\ell$ with each constant value. Inequality~\eqref{eq:odd-line-symbolic} still holds, however, if we instead define $M(k)$ to be the minimum multiplicity of any line of length $k$.

We compute the minimum multiplicity for each of the four possible lengths. Below, let $n_1$, $n_2$, $n_3$, and $n_4$ denote the number of coordinates of $\ell$ with constant value 1, 2, 3, and 4, respectively.
\begin{itemize}
\item If $|\ell|=k$, then $\ell$ can be expressed as $L(x, 2, 3, 4)$ in ${n_2 \choose k} {n_3 \choose k} {n_4 \choose k}$ ways; we also get the corresponding counts for expressions of the form $L(1, x, 3, 4)$, $L(1, 2, x, 4)$, and $L(1,2,3,x)$. In total the line is counted with multiplicity ${n_2 \choose k} {n_3 \choose k} {n_4 \choose k} + {n_1 \choose k} {n_3 \choose k} {n_4 \choose k} + {n_1 \choose k} {n_2 \choose k} {n_4 \choose k} + {n_1 \choose k} {n_2 \choose k} {n_3 \choose k}$.  This sum is minimized when $n_1, n_2, n_3, n_4$ are as equal as possible, so 
\[
	M(k) \ge 4 {\frac{n-k}{4} \choose k}^3.
\]

\item If $|\ell|=2k$, then $\ell$ can be expressed as $L(x, x, 3, 4)$ in ${2k \choose k} {n_3 \choose k}{n_4 \choose k}$ ways; we also get the corresponding counts for expressions of the form $L(x, 2, x, 4)$ and so on, for a total of ${2k \choose k} \left({n_3 \choose k} {n_4 \choose k} + {n_2 \choose k} {n_4 \choose k} + {n_1 \choose k}{n_4 \choose k} + {n_2 \choose k} {n_3 \choose k} + {n_1 \choose k} {n_3 \choose k} + {n_1 \choose k} {n_2 \choose k}\right)$. This is, once again, minimized when $n_1, n_2, n_3, n_4$ are as equal as possible, so 
\[
	M(2k) \ge 6{2k \choose k} {\frac{n-2k}{4} \choose k}^2.
\]

\item If $|\ell|=3k$, then $\ell$ can be expressed as $L(1,x,x,x)$ or $L(x,2,x,x)$ or $L(x,x,3,x)$ or $L(x,x,x,4)$ in a total of 
${3k \choose k,k,k} \left({n_1 \choose k} + {n_2 \choose k} + {n_3 \choose k} + {n_4 \choose k}\right)$ ways, so 
\[
	M(3k) \ge 4 {3k \choose k,k,k} {\frac{n-3k}{4} \choose k}.
\]

\item Finally, if $|\ell|=4k$, then $\ell$ can only be expressed as $L(x,x,x,x)$, which can be done in 
\[
	M(4k) = {4k \choose k,k,k,k}
\]
ways.
\end{itemize}
We can further replace $|\mathcal L_k|$ by ${n \choose k,k,k,k,n-4k} 4^{n-4k}$. This allows us to rewrite inequality~\eqref{eq:odd-line-symbolic} as
\begin{multline*}
	4 {\frac{n-k}{4} \choose k}^3 {n \choose k} 4^{n-k} p_3(k) + 6{2k \choose k} {\frac{n-2k}{4} \choose k}^2 {n \choose 2k} 4^{n-2k} p_3(2k)+ \\
	+ 4 {3k \choose k,k,k} {\frac{n-3k}{4} \choose k} {n \choose 3k} 4^{n-3k} p_3(3k) + {4k \choose k,k,k,k} {n \choose 4k} 4^{n-4k} p_4(4k) \le \\ \le {n \choose k,k,k,k,n-4k} 4^{n-4k}.
\end{multline*}
This inequality can be simplified by factoring out $\frac{4^{n-4k}}{k!^4}$ from each term. If we also replace falling powers $r(r-1)(r-2)(\cdots)(r-s+1)$ by $r^s$ on the right-hand side (as an upper bound) and by $(r-s)^s$ on the left-hand side (as a lower bound), we obtain
\begin{multline*}
	4(n-k)^k (n-5k)^{3k} p_3(k) + 6 (n-2k)^{2k} (n-6k)^{2k} p_3(2k) + \\ + 4(n-3k)^{3k} (n-7k)^k p_3(3k) + (n-4k)^{4k} p_3(4k) \le 14 n^{4k}.
\end{multline*}
Finally, dividing through by $n^{4k}$ yields factors such as $\left(1 - \frac{k}{n}\right)^k$. By iteratively applying the inequality $(1-u)(1-v) \ge 1-u-v$ for $u,v \ge 0$, we bound the first such factor:
\[
	\left(1 - \frac kn\right)^k \left(1 - \frac{5k}{n}\right)^{3k} \ge \left(1 - \frac{k^2}{n}\right) \left(1 - \frac{15k^2}{n}\right) \ge 1 - \frac{16k^2}{n}.
\]
Similarly, the factors of $\left(1 - \frac{2k}{n}\right)^{2k} \left(1 - \frac{6k}{n}\right)^{2k}$, $\left(1-\frac{3k}{n}\right)^{3k} \left(1 - \frac{7k}{n}\right)^k$, and $\left(1-\frac{4k}{n}\right)^{4k}$ are each at most $1 - \frac{16k^2}{n}$. After pulling out this factor, we obtain the inequality~\eqref{eq:embedding-average}.
\end{proof}

\section{Completing the proof of Theorem~\ref{main-theorem}}

To simplify notation, let $p_3^+(k) := \frac{4}{15} p_3(k) + \frac{6}{15} p_3(2k) + \frac{4}{15} p_3(3k) + \frac1{15} p_3(4k)$ (the quantity bounded by Lemma~\ref{lemma:k-embedding}) and let $q^+(k) := \frac{4}{15} q(k) + \frac{6}{15} q(2k) + \frac{4}{15} q(3k) + \frac1{15} q(4k)$. We noted previously that if $q(k) - \frac16 p_3(k) > \frac13$ for some $k$, then equation~\eqref{eq:ultimate} implies that $p_4(k)>0$, so a monochromatic line exists. Similarly, showing that $q^+(k) - \frac16 p_3^+(k) > \frac13$ is positive suffices: this is a weighted average, so $q(ik) - \frac16 p_3(ik) > \frac13$ will hold for some $1 \le i \le 4$.

We express as much of the left-hand side of Lemma~\ref{lemma:collinear-pairs} as possible in terms of $q^+$. We assume that the still-undetermined parameter $\kappa$ is a multiple of 4 for simplicity. In the sum
\begin{equation}
	\label{eq:q-plus-sum}
	\sum_{k=1}^{\kappa/4} (\kappa+1 - 2k) q^+(k)
\end{equation}
the coefficient of each $q(k)$ is $0$ for $k > \kappa$, and otherwise maximized if $k$ is divisible by both 3 and 4, in which case it is at most
\[
	\frac{4}{15}\left(\kappa+1 - 2\cdot k\right) + \frac{6}{15}\left(\kappa+1-2\cdot \frac{k}{2}\right) + \frac{4}{15}\left(\kappa+1- 2 \cdot \frac k3\right) + \frac1{15}\left(\kappa+1- 2 \cdot \frac k4\right) = \kappa + 1 - \frac{103}{90}k,
\]
so it is always less than $\kappa+1-k$. This means pulling out the sum~\eqref{eq:q-plus-sum} from the left-hand side of Lemma~\ref{lemma:collinear-pairs} leaves each $q(k)$ with a positive coefficient: we may write
\begin{equation}
	\label{eq:pull-out}
	\sum_{k=1}^\kappa (\kappa+1-k) q(k)  = \sum_{k=1}^{\kappa/4} (\kappa+1 - 2k) q^+(k) + \sum_{k=1}^\kappa R_k q(k)
\end{equation}
where $R_1, \dots, R_{\kappa}$ are all positive. Furthermore, though each $R_k$ is tedious to calculate, since equation~\eqref{eq:pull-out} is valid for all values of $q(1), \dots, q(\kappa)$, it remains valid if we set each of them to 1, and therefore
\[
	\sum_{k=1}^\kappa R_k = \sum_{k=1}^\kappa (\kappa+1-k) - \sum_{k=1}^{\kappa/4} (\kappa+1 - 2k) = \frac{\kappa^2+\kappa}{2} - \frac{3\kappa^2}{16} = \frac{5\kappa^2+8\kappa}{16}.
\]

If $q(k) > \frac12$ for any $k$, then from equation~\eqref{eq:density} we can conclude that $p_4(k) > 0$ and a monochromatic line exists. So assume the contrary: that $q(k) \le \frac12$ for all $k$. Then equation~\eqref{eq:pull-out} implies that
\[
	\sum_{k=1}^\kappa (\kappa+1-k) q(k)  \le \sum_{k=1}^{\kappa/4} (\kappa+1 - 2k) q^+(k) +  \frac{5\kappa^2+8\kappa}{16} \cdot \frac12.
\]
Therefore, by applying Lemma~\ref{lemma:collinear-pairs}, 
\[
	\sum_{k=1}^{\kappa/4} (\kappa+1 - 2k) q^+(k) \ge \frac{\kappa^2-1}{4} (1 - \epsilon(n, \kappa)) -  \frac{5\kappa^2+8\kappa}{32},
\]
where $\epsilon(n,\kappa)$ is the relative error term
\[
	\epsilon(n,\kappa) := e^{-(n-\kappa)/8} + 3 \kappa \sqrt{\frac{2}{\pi (n-\kappa)}}.
\]
Dividing by $\frac{3\kappa^2}{16}$ to obtain a weighted average and simplifying, we are left with
\[
	\frac{16}{3\kappa^2} \sum_{k=1}^{\kappa/4} (\kappa+1 - 2k) q^+(k) \ge \frac12 - \frac{4}{3\kappa} - \frac{4}{3}\epsilon(n,k) + \frac{4(1-\epsilon(n,k))}{3\kappa^2}.
\]
The last term is positive and may be dropped. Therefore there is some $k^* \le \kappa/4$ for which  $q^+(k^*) \ge \frac12 - \frac{4}{3\kappa} - \frac{4}{3}\epsilon(n,k)$.

On the other hand, Lemma~\ref{lemma:k-embedding} tells us that, as long as $4k^* < \sqrt{n}$, $p_3^+(k^*) \le \frac{14}{15} \left(1 - \frac{16 (k^*)^2}{n}\right)^{-1}$, which is at most $\frac{14}{15} \left(1 - \frac{\kappa^2}{n}\right)^{-1}$. Therefore a lower bound on $q^+(k^*) - \frac16 p_3^+(k^*) - \frac13$ is 
\begin{equation}
	\label{positive}
	\frac16 - \frac{4}{3\kappa} - \frac{4}{3}\epsilon(n,k) - \frac{7}{45}\left(1 - \frac{\kappa^2}{n}\right)^{-1},
\end{equation}
which is valid for any $\kappa < \sqrt{n}$.

As $n \to \infty$ and $\kappa \to \infty$, provided that $\frac{\kappa^2}{n} \to 0$, \eqref{positive} approaches $\frac1{90}$, and so a monochromatic line must exist. In particular, \eqref{positive} is already positive for $n=10^{11}$ and $\kappa=368$, completing the proof. (More precisely, $n=19\,012\,590\,257$ and $\kappa=240$ are enough.)

\bibliographystyle{plain}

\end{document}